\newtheorem{theorem}{Theorem}[section]
\newtheorem{proposition}[theorem]{Proposition}
\newtheorem{definition}[theorem]{Definition}
\newtheorem{remark}[theorem]{Remark}
\numberwithin{equation}{section}
\begin{document}

%
%
%

%
%
%
%
%

\title[On evolutoids of regular surfaces]
 {On evolutoids of regular surfaces in Eu\-cli\-dean $3$-space}

\author[Ady Cambraia Jr]{Ady Cambraia Jr$^\star$}

\address{Departamento de Matem\'atica, Universidade Federal de Vi\c cosa, 36570-900, Brazil.}
\email{ady.cambraia@ufv.br}


\thanks{Work partially  supported  by  FAPEMIG}
\thanks{$^\star$Corresponding author}

\author{Ab\'ilio Lemos}
\address{Departamento de Matem\'atica, Universidade Federal de Vi\c cosa, 36570-900, Brazil.}
\email{abiliolemos@ufv.br}

\author{Mostafa Salarinoghabi}
\address{Departamento de Matem\'atica, Universidade Federal de Vi\c cosa, 36570-900, Brazil.}
\email{mostafa.salarinoghabi@ufv.br}

\subjclass{53A05; 58K05}

\keywords{Envelopes, focal surface, contact with planes, differential geometry.}

\date{March 13, 2019}

\begin{abstract}
Inspired by the concept of evolutoids of planar curves, we present the concept of evolutoids for regular surfaces as an envelope of a two-parameter family of lines in Euclidean $3$-space. We give an explicit parametrization for such evolutoids. Besides, we used some standard differential geometry techniques to study the local behavior of regular points of this object and presented some relations between the geometry of the regular surface and its evolutoid.
\end{abstract}

\maketitle

\section{Introduction}\label{intro}
Let $F:\mathbb{R}^n\times\mathbb{R}^k\to\mathbb{R}^d$ be a $k$-parameters family of a smooth map $f:\mathbb R^n\to\mathbb R^d$. The envelope, or the discriminant, of the family  $F$ is the set 
\[\mathcal{D}=\mathcal{D}_F=\left\{x\in \mathbb R^n: \text{there exists}\, t\in\mathbb R^k\,\text{with}\,F(x,t)=\dfrac{\partial F}{\partial t}(x,t)=0\right\},\]
where for $t=(u_1,\dots,u_k)\in\mathbb R^k$, the equation 
$\partial F/\partial t(x,t)=0$
means \linebreak
$\partial F/\partial u_i(x,t)=0$ for all $1\leq i \leq k$. If $x\in \mathcal{D}$ then $t$ is said to correspond to $x$. The set of $x$s corresponding to a given $t$ is called the \textit{characteristic} corresponding to that value of $t$ (for more details we refer the reader to \cite{bruce}). The evolute of a parametric plane curve is a well-known example of characteristic corresponded to a real value $t$. Precisely speaking, the evolute of a plane curve $\gamma$ is the characteristic given by the envelope of the family of normal lines of $\gamma$. It is widely known  that  the cusps in the evolute correspond to the curvature extrema of $\gamma$. The evolute also goes to infinity corresponding to inflections (zeros of curvature) of $\gamma$.

Another interesting curve associated to the plane curve $\gamma$
is its \textit{e\-vo\-lu\-toid}: that is, the characteristic obtained by the envelope of lines making a fixed angle ($\alpha\in (0,\pi/2)$) with the normal line at every point of the curve. This is the subject
of several investigations, (see for instance \cite{giblin1,jeronimo1, jeronimo2, hamman}). In \cite{giblin1} the authors considered non-oval plane curves. Also, they introduced some standard techniques of singularity theory to explain how the evolutoid changes when the angle $\alpha$ ranges between 0 and $\pi/2$.

To the best of our knowledge, there are no sensible generalizations of evolutoids to higher dimensions. Hence, it is natural to wonder how one can generalize the concept of evolutoid for surfaces in  Euclidean 3-space. The focal set of a surface in $\mathbb R^3$ is the analogue of the evolute of a plane curve. Let $S$ be a smooth and regular surface in $\mathbb R^3$ without parabolic points, and let $\phi : U\subset \mathbb R^2 \to \mathbb R^3$ be a local parametrization of $S$. The
focal set (or the evolute) of $S$ is the set $\epsilon(S)=\epsilon_1(S)\cup\epsilon_2(S)$ where
\begin{align*}
\epsilon_i(p)=\left\{p+\frac{1}{\kappa_i(p)} N(p), \, p\in S\right\}, \,\, i=1,2,
\end{align*}
here $N(p)$ denotes the normal vector of $S$ at $p$ and for $i=1,2$, $\kappa_i(p)$ is the principal curvature of $S$ at the point $p$.
Given a non-umbilic point $p \in S$, two distinct focal points
corresponding to $p$ can be observed; one is on $\epsilon_1$ and the other, on $\epsilon_2$. These focal points coincide when $p$ is an umbilic point. The focal sets and their geometry are well studied, see for example \cite{BruceGiblinFarid,BruceWilkinson,izumiyatakahashi,wilkinson}.

Similar to the case of plane curves, we generalize the concept of evolutoid for the surface $S$ as the envelope of lines making a fixed angle ($\alpha\in (0,\pi/2)$) with the normal vector, $N$, at each point of the surface. There are some questions that one may ask. How is it possible to obtain the parametrization of the evolutoid? What is the regularity condition of the evolutoid? What geometric information about the surface $S$ can be derived from its evolutoid? The aim of this paper is to deal with these questions.

We have proved in this paper that, in general the evolutoid, $\mathcal E$,  is regular at the point $p\in \mathcal E$ if and only if the corresponding point in the original surface $S$ is not a ridge point. Also, the point $p$ is a parabolic point of the evolutoid $\mathcal{E}$ if and only if its corresponding point in $S$ is a subparabolic point of $S$. 

The main results of this paper are Theorem \ref{Theo:height} where we study the contact of evolutoid with planes and Theorem \ref{Theo:Sing} where we investigate the type of sin\-gu\-la\-ri\-ties of the evolutoid.

The paper is organized as follows. In section \ref{sec:Surface}, we obtain the pa\-ra\-me\-tri\-za\-tion of the evolutoid of surfaces in $\mathbb R^3$. The local differential geometry of the evolutoid and its relations with the geometry of the surface $S$ have been studied in section \ref{local}.    
\section{Evolutoid of a regular surface in $\mathbb{R}^3$}\label{sec:Surface}

Let  $F:\mathbb{R}^n\times\mathbb{R}^k\to\mathbb{R}^d$ be a $k$-parameter family of smooth functions given by $(x_1,\dots,x_n,u_1,\dots,u_k)\mapsto F(x_1,\dots,x_n,u_1,\dots,u_k)$. As mentioned in Section \ref{intro}, the envelope of the family $F$ is the solution set, $(x_1,\dots,x_n)$, of the system 

\begin{equation}\label{aa1}
\left\{\begin{array}{ccc}
F(x_1,x_2,\dots,x_n,u_1,u_2,\dots,u_k) & = & 0 \vspace{0.3cm} \\
F_{u_i}(x_1,x_2,\dots,x_n,u_1,u_2,\dots,u_k)& = & 0
\end{array}\right.
\end{equation}
 where $(x_1, x_2, \dots,x_n)$ is characteristic corresponding to the parameter   $(u_1,  \\ u_2,  \dots ,u_k)$. Also $F_{u_i}$ denotes the partial derivative $\partial F/\partial u_i,$ for $i=1,\dots,k$. Consequently, the envelope of the family $F$ exists when the rank of the coefficients matrix is less than $k+1$.

 A point $p$ on a regular surface $S$ is called a {\it parabolic point} if the Gaussian curvature of $S$, denoted by $K$, vanishes at $p$. Also, $p$ is called an {\it umbilic point} if the normal curvatures at $p$ in all directions are equal. Let $S$ be a regular surface without parabolic and umbilical points. Suppose that $\phi:U\to S$ is a parametrization of $S$, such that the coordinate curves are lines of curvature. The parametrized surfaces 
$$\epsilon_i(u,v)=\phi(u,v) + \dfrac{1}{\kappa_i}N(u,v), \quad i=1,2,$$ where $\kappa_i$'s  are the principal curvatures of $S$, are called  focal surfaces of $\phi(U)$. These surfaces can be obtained via envelope of a two-parameter family of normal lines to $S$, i.e.,
$$F(X,u,v)= X-\phi(u,v)-rN(u,v),$$
at $p=\phi(u,v)$, where $r:U\to\mathbb{R}$ is a real value function.
More precisely, we have
\begin{equation}\label{aa2}
\left\{\begin{array}{cccccc}
F_u & = & -\phi_u - rN_u & = & -(1+ra_{11})\phi_u-ra_{21}\phi_v & = 0, \\
F_v & = & -\phi_v - rN_v & = & -ra_{12}\phi_u-(1+ra_{22})\phi_v & =0.
\end{array}\right.
\end{equation}
The coefficient matrix of the system \eqref{aa2} has rank less than $2$ if 
\begin{align*} 
(1+ra_{11})(1+ra_{22})-r^2a_{12}a_{21}=0 & \Leftrightarrow \\ 
1+ r(a_{11}+a_{22})+r^2(a_{11}a_{22}-a_{12}a_{21})=0 & \Leftrightarrow \\ 1-2rH+r^2K=0&,
\end{align*}
where $H$ is the mean curvature of $S$. Thus, $r=1/\kappa_1(p)$ or $r=1/\kappa_2(p).$
 We now proceed to generalize these concepts. For fixed real values $\alpha$ and $\beta$,
consider the vector 
$$v_{\alpha,\beta}=\cos\alpha\cos\beta\cdot \dfrac{\phi_u}{||\phi_u||}+ \cos\alpha\sin\beta\cdot \dfrac{\phi_v}{||\phi_v||}+ \sin\alpha\cdot N,$$
 making a constant angle with the tangent plane of the surface $S$ at the point $p=\phi(u,v) \in S$ which is not neither a parabolic nor an umbilic point. 
%

\begin{definition}
Under the above assumptions,
the evolutoid of $S$ is the envelope of family of lines passing through the point $p$ with direction $v_{\alpha,\beta}$, i.e., of the family 
\begin{equation}\label{Q}
\begin{array}{cccl}
Q: & \mathbb{R}^3\times U &  \longrightarrow  & \mathbb{R}^3 \\
 & (\bar{X},u,v) & \longmapsto & Q(\bar{X},u,v)=\bar{X}-\phi(u,v)-rv_{\alpha,\beta},
\end{array}
\end{equation}
where $r:U\to\mathbb{R}$ is a real value function $($see Figure \ref{fig:fig1}$).$ 
\end{definition}

\begin{figure}[!htb]
	\includegraphics[clip, scale=0.35]{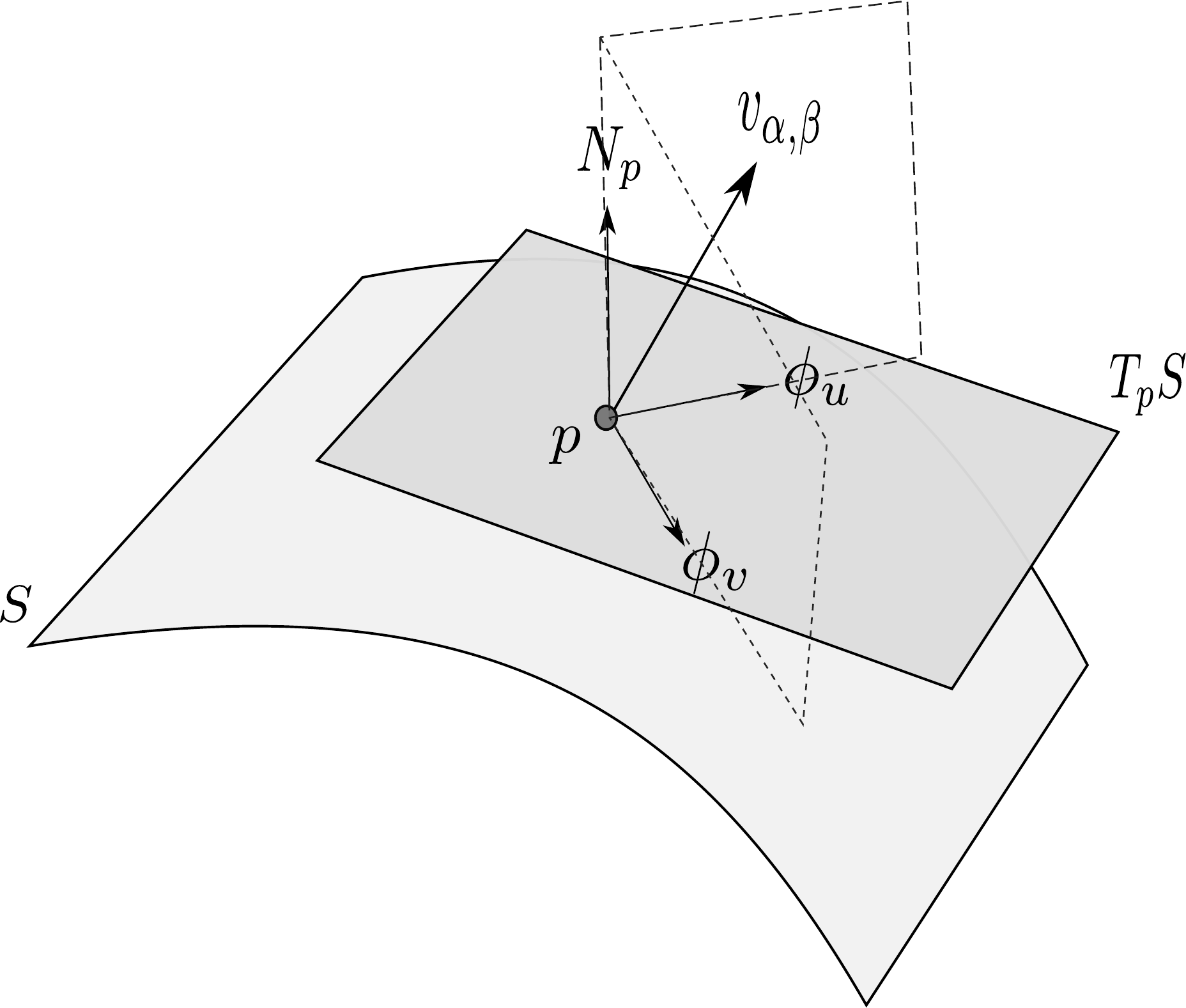}
	\caption{Lines passing through the point $p$ with direction $v_{\alpha,\beta}$.}
	\label{fig:fig1}
\end{figure}
The derivatives of  the family $Q$,  given in \eqref{Q}, are
\begin{align*}
 Q_u &=b_{11}\phi_u + b_{12}\phi_v +b_{13}N,\\
 Q_v & =b_{21}\phi_u+b_{22}\phi_v+b_{23}N,
 \end{align*}
 where $b_{ij}$ are as follow
\begin{center}
\begin{tabular}{ ll } 
 $b_{11} = -1 - r\left(\dfrac{\cos(\alpha)\sin(\beta)E_v}{2E\sqrt{G}}- \sin(\alpha)\kappa_1\right)$ & $b_{12}=\dfrac{r\cos(\alpha)\cos(\beta)E_v}{2G\sqrt{E}}$ \vspace{0.3cm}\\ 
 $b_{13}=- \dfrac{r\cos(\alpha)\cos(\beta)e}{\sqrt{E}}$ & $b_{21}=\dfrac{r\cos(\alpha)\sin(\beta)G_u}{2E\sqrt{G}}$ \vspace{0.3cm}\\ 
 $b_{22}=-1-r\left(\dfrac{\cos(\alpha)\cos(\beta)G_u}{2G\sqrt{E}}-\sin(\alpha)\kappa_2\right)$ & $b_{23}=-\dfrac{r\cos(\alpha)\sin(\beta)g}{\sqrt{G}},$
\end{tabular}
\end{center}

\noindent here $E, G$ and $e,g$ are the coefficients of the first and second fundamental forms of $S$ at the point $p$, respectively. Besides, $\kappa_1=e/E$ and $\kappa_2=g/G$ are the principal curvatures of $S$ at the point $p$.

The condition for a point $p$ to belong to the envelope of the family $Q$ is that the rank of its Jacobian matrix $(b_{ij})_{2\times3}$, denoted by $J_{\alpha,\beta}$, should be less than 2.
In the following proposition we show that this happens only for two values of $\beta$.

\begin{proposition}\label{prop:beta}
By the above assumptions the envelope of the family of lines through the point $p$ at the direction $v_{\alpha,\beta}$, given in \eqref{Q}, has solution if and only if either $$\beta=0 \ \textrm{and} \  r = -\dfrac{2\sqrt{E}G}{\cos(\alpha)G_u-2\sin(\alpha)\sqrt{E}G\kappa_2}$$ or $$\beta=\dfrac{\pi}{2} \ \textrm{and} \ r = -\dfrac{2\sqrt{E}G}{\cos(\alpha)E_v-2\sin(\alpha)\sqrt{G}E\kappa_1}.$$
\end{proposition}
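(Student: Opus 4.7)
The strategy is to translate the rank condition on the $2\times 3$ matrix $J_{\alpha,\beta}=(b_{ij})$ into the simultaneous vanishing of its three $2\times 2$ minors $M_{12},M_{13},M_{23}$, exploit the factors of $\cos\beta$ and $\sin\beta$ that appear in the third column, and show that outside $\beta\in\{0,\pi/2\}$ the hypotheses on $S$ forbid all three minors from vanishing simultaneously.

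First I would set aside the trivial case $r=0$, which makes $J_{\alpha,\beta}=\mathrm{diag}(-1,-1,0)$ and is of rank $2$, and use the standing assumptions $\alpha\in(0,\pi/2)$, $\kappa_1\kappa_2\neq 0$ (no parabolic points) and $\kappa_1\neq\kappa_2$ (no umbilics). Using $e=\kappa_1 E$ and $g=\kappa_2 G$ one rewrites $b_{13}=-r\cos\alpha\cos\beta\,\kappa_1\sqrt{E}$ and $b_{23}=-r\cos\alpha\sin\beta\,\kappa_2\sqrt{G}$, so that the entries of $J_{\alpha,\beta}$ carrying the factor $\cos\beta$ are exactly $b_{12},b_{13}$ and (through the term in $b_{22}$) part of $b_{22}$, while those carrying the factor $\sin\beta$ are $b_{11},b_{21},b_{23}$. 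A direct expansion then factors
\[
M_{23}=r\cos\alpha\cos\beta\cdot A,\qquad M_{13}=r\cos\alpha\sin\beta\cdot B,
\]
for explicit polynomials $A$ and $B$ in $r$ and the metric data.

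The crucial step, and what I expect to be the main algebraic obstacle, is to verify the identity
\[
\sqrt{G}\,A+\sqrt{E}\,B=(\kappa_2-\kappa_1)\sqrt{EG},
\]
in which the $E_v$, $G_u$ and $r\sin\alpha\,\kappa_1\kappa_2$ terms cancel in pairs and only the two ``constant'' summands $-\kappa_1\sqrt{EG}$ and $\kappa_2\sqrt{EG}$ survive. Since $E,G>0$ and $\kappa_1\neq\kappa_2$, the right-hand side never vanishes. Hence if both $\cos\beta$ and $\sin\beta$ were nonzero, the conditions $M_{23}=0$ and $M_{13}=0$ would force $A=B=0$, contradicting this identity. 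Consequently either $\beta=0$ or $\beta=\pi/2$.

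It remains to determine $r$ in each of the two cases. For $\beta=0$ one has $b_{21}=b_{23}=0$, so $M_{13}\equiv 0$, while $M_{23}$ collapses to $r\cos\alpha\,\kappa_1\sqrt{E}\,b_{22}$; since $r,\cos\alpha,\kappa_1\neq 0$ this reduces to $b_{22}=0$, and solving for $r$ gives the first expression stated. Once $b_{22}=0$ the whole second row of $J_{\alpha,\beta}$ is zero, so $M_{12}$ vanishes automatically and the rank is $\le 1$. The case $\beta=\pi/2$ is symmetric under the interchanges $u\leftrightarrow v$, $E\leftrightarrow G$, $\kappa_1\leftrightarrow\kappa_2$: now $M_{23}\equiv 0$ and $M_{13}=0$ forces $b_{11}=0$, yielding the second formula for $r$, and the first row of $J_{\alpha,\beta}$ vanishes identically. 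This establishes both directions of the proposition.
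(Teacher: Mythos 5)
Your proof is correct, and I verified your key identity: with
\[
A=-\frac{r\cos\alpha\sin\beta\,\kappa_2E_v}{2\sqrt{EG}}+\kappa_1\sqrt{E}\,b_{22},\qquad
B=-\kappa_2\sqrt{G}\,b_{11}+\frac{r\cos\alpha\cos\beta\,\kappa_1G_u}{2\sqrt{EG}},
\]
the $E_v$-, $G_u$- and $r\sin\alpha\,\kappa_1\kappa_2$-terms do cancel in pairs and $\sqrt{G}A+\sqrt{E}B=(\kappa_2-\kappa_1)\sqrt{EG}\neq0$ exactly as you claim. Your route is genuinely different from the paper's. The paper argues via row proportionality: it posits a nonzero $\lambda$ with $b_{1i}=\lambda b_{2i}$, reads off $\lambda=g\sqrt{E}\tan\beta/(e\sqrt{G})$ from the third column, solves $b_{12}=\lambda b_{22}$ for $r$, and then asserts (without displaying the computation) that $b_{11}=\lambda b_{21}$ has no nonzero solution in $\beta$, concluding that a whole row must vanish. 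You instead work with the three $2\times2$ minors, factor out $\cos\beta$ and $\sin\beta$, and use the linear-combination identity to exclude the mixed case without ever solving for $r$; this makes the exclusion step fully checkable and shows precisely where the non-umbilic hypothesis $\kappa_1\neq\kappa_2$ is used, whereas the paper leaves that crucial step implicit. Your endgame (one row forced to vanish, then $b_{22}=0$ or $b_{11}=0$ determines $r$) coincides with the paper's conclusion. One small remark: solving $b_{11}=0$ for $\beta=\pi/2$ actually gives numerator $2E\sqrt{G}$ rather than the $2\sqrt{E}G$ printed in the statement; this appears to be a typo in the paper, and your derivation (symmetric under $u\leftrightarrow v$, $E\leftrightarrow G$, $\kappa_1\leftrightarrow\kappa_2$) produces the correct expression.
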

\begin{proof}
	
	Let $\alpha\in(0,\pi/2)$. The rank of the matrix $J_{\alpha,\beta}=\left(b_{ij}\right)$ is less than 2 if there exists a non-zero $\lambda\in\mathbb R$ such that
	\begin{align*}
	b_{1i} = \lambda b_{2i} \quad \text{for}\quad i=1,2,3. 
	\end{align*}
	From the equation $b_{13}=\lambda b_{23}$ we obtain 
	\[\lambda = \dfrac{g \sqrt{E}  \tan(\beta)}{e \sqrt{G}},\]
	which is not zero if and only if $\tan(\beta)\ne 0$. In this case,
	by substituting $\lambda$ in the equation $b_{12} = \lambda b_{22}$ we can find $r$. It is easy to proof that, after replacing $\lambda$ and $r$ in the last equation, $b_{11} = \lambda b_{21}$, the resulting equation does not have non-zero solution for $\beta$. This means that the only possibilities for the matrix $J_{\alpha,\beta}$ to have rank one, are when one of the rows of $J_{\alpha,\beta}$ is zero. Precisely speaking, these happens when 
	\begin{align*}
\beta = 0 \quad \text{and} \quad r = -\dfrac{2\sqrt{E}G}{\cos(\alpha)G_u-2\sin(\alpha)\sqrt{E}G\kappa_2}, 
	\end{align*}  
or
	\begin{align*}
\beta = \frac{\pi}{2} \quad \text{and} \quad r = -\dfrac{2\sqrt{E}G}{\cos(\alpha)E_v-2\sin(\alpha)\sqrt{G}E\kappa_1}.
\end{align*}

Notice that, generically, we can assume the denominators of $r$'s, given above, are different from zero.
\end{proof}

As the calculations for the case $\beta=\pi/2$ are similar to one of $\beta=0$, from now on, we consider $\beta=0$. 


Therefore, using the Proposition \ref{prop:beta}, the parametrization of the envelope is
\begin{align}\label{Envelope}
\mathcal E_1(u,v)=\phi(u,v)+ r\left(\dfrac{\cos(\alpha)}{\sqrt{E}}\phi_u+\sin(\alpha)N(u,v)\right).
\end{align}
For $\beta=\pi/2$, we denote the corresponding envelope by $\mathcal{E}_2.$

\begin{remark} 
The set $\mathcal{E}=\mathcal{E}_1\cup\mathcal{E}_2$ is called the Evolutoid of the surface $S$ at direction $v_{\alpha,\beta}.$
\end{remark}

For the sake of simplicity, throughout this paper, the evolutoid of surface $S$ is called $\mathcal{E}_1$.
%

%
%


\section{Local behavior of evolutoid with singularity view point} \label{local}


We devote this section to a local study on the evolutoid of the surface $S$ using differential geometry techniques. 
Let $f : U \subset \mathbb{R}^n \to\mathbb R^m$ be a smooth map and denote its derivative map by $df : TU \to T\mathbb{R}^m$. The map $f$ is \textit{singular} at $p\in U$ if the rank of the linear map
\[(df)_p : \mathbb{R}^n \to \mathbb{R}^m\]
is not maximal, that is, if $rank(df)_p < min(n,m)$.
The point $p$ is then said
to be a singular point of $f$. Otherwise, we say that $f$ is non-singular at $p$
and $p$ is a \textit{regular} point of $f$.

Two maps $f,g : \mathbb{R}^n \to \mathbb{R}^m$ are said to be $\mathcal{A}$-equivalent if there exist diffeomorphisms $h:\mathbb{R}^n\to\mathbb{R}^n$ and $\xi: \mathbb{R}^m \to \mathbb{R}^m$ such that $f=\xi\circ g\circ h^{-1}$.

The jet space $J^k(n,m)$ is the set of polynomials of degree less than or equal to $k$, without the constant terms. For a given map-germ $f:\mathbb{R}^n\to\mathbb{R}^m$ with $f(0)=0$, one can associate a polynomial in $J^k(n,m)$ to $f$, which is denoted by $j^k f$. Actually, $j^k f$ is simply its Taylor polynomial of degree $k$ at the origin. 

\begin{definition}
	Let $f : (\mathbb R, t_0) \to \mathbb R$ be smooth such that $f^{(i)}(t_0) = 0$ for all $1 \leq i \leq k$, while $f^{(k+1)}(t_0)\ne 0$. Then, we say that $f$ has $A_k$-singularity at $t_0$.
\end{definition}
It is worth mentioning that if $f$ has $A_k$-singularity at $t=t_0$, then it is $\mathcal A$-equivalent to the function $g(t)=\pm t^{k+1}$. A germ-function $g:(\mathbb{R}^m,0) \to \mathbb{R}$ has $A_k$-singularity at the origin if it is $\mathcal A$-equivalent to $\pm x_1^{k+1} \pm x_2^2 \pm \cdots \pm x_m^2.$

Next, we will study the differential geometry of the evolutoid of the surface $S$, considering the singularity theory.


\subsection{Monge form}\label{sec:Monge}


Let $\langle\,,\rangle$ denote the Euclidean inner product. For any regular 
surface $S$ in $\mathbb R^3$ and for any point $q\in S$, we can find an 
isometric change of coordinates $T:(\mathbb R^3,q) \to (\mathbb R^3,0)$
such that $T(S)$ is expressed by the image of the map 
\begin{align}\label{Monge}
\phi : &(\mathbb R^2,0) \to (\mathbb R^3,0)\\
& (u,v) \mapsto (u,v,f(u,v)) \nonumber,
\end{align} 
at least locally, where $$f(u,v)=\dfrac{1}{2}(\kappa_1(0) u^2+\kappa_2(0) v^2) +\sum_{s=3}^{k}c_s+O(k),$$ $c_s=\sum_{i+j=s}\dfrac{a_{ij}}{i!j!} u^iv^j$ and $\kappa_i(0)$ are the principal curvature at the origin for $i=1,2$.
The expression (\ref{Monge}) is called the Monge form pa\-ra\-me\-tri\-za\-tion 
of $S$. Note that the coefficients $\kappa_1$, $\kappa_2$ and $a_{ij}$ are 
differential-geometric invariants of $S$ at $q$. Also, suppose that 
the point $q$ is not neither a parabolic nor  an umbilic point and $\kappa_1(0)>\kappa_2(0)$.  

The Monge form parametrization (\ref{Monge}) can be used to obtain the pa\-ra\-me\-tri\-za\-tion of the envelope (\ref{Envelope}), as demonstrated below.

\begin{proposition}\label{prop:evol}
	Under the above assumptions, the local parametrization of the evolutoid of the surface $S$ at the point $q$ is given by 
	\begin{align}\label{evolutoid}
	\mathcal E_1(u,v) = (h_1(u,v),h_2(u,v),h_3(u,v)),	
	\end{align}
where
\begin{align*}
	h_1(u,v) & = b_{100} + b_{110} u +b_{101} v +O(2),\\
	h_2(u,v) & = b_{220} u^2+b_{202} v^2 +O(3),\\
	h_3(u,v) & = b_{300}+ b_{310} u +b_{301} v +O(2),
\end{align*}
with
$$
\begin{array}{lll}
b_{100} =\dfrac{\cot(\alpha)}{\kappa_2}, & b_{110} =\dfrac{\kappa_2(\kappa_2-\kappa_1)-a_{12}\cot(\alpha)}{\kappa_2^2}, & b_{101}=-\dfrac{a_{03}\cot(\alpha)}{\kappa_2^2}, \vspace{0.3cm}\\
b_{220} = - \dfrac{a_{21}}{2\kappa_2}, & b_{202} = \dfrac{a_{03}}{2\kappa_2},& \vspace{0.3cm}\\
b_{300}  = \dfrac{1}{\kappa_2}, & b_{310}  = \dfrac{-a_{12}+\kappa_1\kappa_2 \cot(\alpha)}{\kappa_2^2}, & b_{301} = - \dfrac{a_{03}}{\kappa_2^2},
\end{array}
$$
here $\kappa_1$ and $\kappa_2$ are the principal curvatures at the point $q$.
\end{proposition}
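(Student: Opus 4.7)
My plan is to substitute the Monge parametrization \eqref{Monge} directly into the envelope formula \eqref{Envelope}, expand every ingredient as a Taylor series in $(u,v)$ about the origin, and extract the stated coefficients. Concretely, I first compute $\phi_u=(1,0,f_u)$, $E=1+f_u^2$, $G=1+f_v^2$, $W=\sqrt{1+f_u^2+f_v^2}$, and $N=(-f_u,-f_v,1)/W$ to the required order, using $f_u=\kappa_1 u+O(2)$, $f_v=\kappa_2 v+\frac{a_{21}}{2}u^2+a_{12}uv+\frac{a_{03}}{2}v^2+O(3)$, $f_{vv}=\kappa_2+a_{12}u+a_{03}v+O(2)$, and the analogous expansion of $f_{uv}$. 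All of $E$, $G$, $W$ evaluate to $1$ at the origin, and $N(0)=(0,0,1)$.

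Next, I expand the scalar function $r$ given in Proposition \ref{prop:beta}. Since $G_u=2f_v f_{uv}=O(2)$, the denominator of $r$ equals $-2\sin(\alpha)\kappa_2+O(1)$, and inverting the resulting geometric series yields
\begin{equation*}
r=\frac{1}{\sin(\alpha)\kappa_2}-\frac{a_{12}u+a_{03}v}{\sin(\alpha)\kappa_2^{2}}+O(2).
\end{equation*}
In parallel, the direction vector $v_{\alpha,0}=(\cos(\alpha)/\sqrt{E})\phi_u+\sin(\alpha)N$ has value $(\cos(\alpha),0,\sin(\alpha))$ at the origin, with linear corrections directly readable from $f_u$ and $f_v$.

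With these ingredients in hand, I assemble $\mathcal{E}_1=\phi+r\cdot v_{\alpha,0}$ and read off the coefficients componentwise. For $h_1$ and $h_3$, truncating $r$ and $v_{\alpha,0}$ to linear order suffices, and a direct multiplication produces the claimed $b_{100}, b_{110}, b_{101}, b_{300}, b_{310}, b_{301}$. For $h_2=v+r\cdot(v_{\alpha,0})_{y}$, where $(v_{\alpha,0})_y=-\sin(\alpha)\,f_v/W$ has no constant term, I first verify that the linear part vanishes identically: the decisive cancellation is $\partial_v h_2(0)=1+r(0)\cdot(-\sin(\alpha)\kappa_2)=0$. The coefficient of $u^2$ in $h_2$ then comes solely from $r(0)$ multiplied by the $u^2$ term of $(v_{\alpha,0})_y$, giving $b_{220}=-a_{21}/(2\kappa_2)$; the coefficient of $v^2$ instead receives two contributions of opposite sign, namely $r(0)$ times the $v^2$ term of $(v_{\alpha,0})_y$ and the linear-in-$v$ term of $r$ times the linear term of $(v_{\alpha,0})_y$, which combine to $b_{202}=a_{03}/(2\kappa_2)$.

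The main obstacle is the bookkeeping in this last step: the quadratic coefficients of $h_2$ arise from mixing different orders of $r$ and $(v_{\alpha,0})_y$, so one must carry $r$ to linear precision rather than simply evaluate it at the origin, and one must correctly identify which pairs of terms contribute. Once the right pairings are fixed, the remaining work is mechanical Taylor arithmetic.
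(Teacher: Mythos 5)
Your proposal is correct and is exactly the computation the paper leaves implicit: substitute the Monge form into the envelope formula \eqref{Envelope} with the value of $r$ from Proposition \ref{prop:beta} (expanding $\kappa_2=\kappa_2(0)+a_{12}u+a_{03}v+O(2)$, which is what produces the $a_{12},a_{03}$ terms in $r$), and read off the Taylor coefficients; your treatment of the quadratic terms of $h_2$, including the cancellation of the linear and $uv$ terms and the two opposite-sign contributions to the $v^2$ coefficient, reproduces all the stated $b_{ijk}$. The only blemish is the stray ``$O(1)$'' where you mean ``$O(2)$'' in the expansion of the denominator of $r$; the displayed series for $r$ is nevertheless correct.
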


According to the Proposition \ref{prop:evol}, the evolutoid $\mathcal{E}_1$ passes through 
the point $p=\left(\cot(\alpha)/\kappa_2, 0, 1/\kappa_2 \right)$ at $(0,0)$.

For the surface $S$, there are some special points on $S$ which contain differential geometry information of $S$. Some of these points are ridge points and sub-parabolic points. Geometrically speaking we have:
\begin{definition}
	\begin{enumerate}
		\item A ridge point of $S$ is an $A_3$-singularity of some distance squared function on $S.$ The closure of the set of ridge points is called the ridge of the surface $S.$
		\item The set of points on the
		surface $S$ which correspond to the parabolic set of its focal set is defined in
		\cite{BruceWilkinson,wilkinson} as the sub-parabolic curve of $S$.
	\end{enumerate}
\end{definition}
\begin{remark}
	 It is well-known that the image of the ridge associated to the principal curvature $\kappa_i$ by the
		map $\epsilon_i(p) = p+1/\kappa_i(p)\, N(p)$, for $i = 1,2$, is precisely the singular set of the focal set.
\end{remark}
 For further details on ridge and sub-parabolic points, see \cite{fukui}.
 \begin{proposition}\label{prop:regular}
	Generically, the evolutoid given in \eqref{evolutoid} is regular if and only if the origin is not a ridge point of $S$, with respect to the vector $\partial_v$.
\end{proposition}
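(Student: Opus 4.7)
The plan is to read off the Jacobian of $\mathcal{E}_1$ at $(u,v)=(0,0)$ directly from the jet expansion in Proposition~\ref{prop:evol}, reduce the regularity condition to a single polynomial inequality in the jet-invariants of $S$, and then recognise that inequality as the classical condition that the origin is \emph{not} a ridge point with respect to $\partial_v$.

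First, I would note the key structural feature of~(\ref{evolutoid}): since $h_2$ starts at order two, we have $(h_2)_u(0,0)=(h_2)_v(0,0)=0$, so
$$(\mathcal{E}_1)_u(0,0)=(b_{110},0,b_{310}),\qquad (\mathcal{E}_1)_v(0,0)=(b_{101},0,b_{301}).$$
Both tangent vectors lie in the plane $\{y=0\}$, and therefore their cross product has only a $y$-component, equal to $b_{310}b_{101}-b_{110}b_{301}$. Consequently, $\mathcal{E}_1$ is regular at the origin if and only if this $2\times 2$ determinant is non-zero.

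Next, I would substitute the explicit expressions for $b_{110},b_{101},b_{310},b_{301}$ from Proposition~\ref{prop:evol} into the determinant. The contributions involving $a_{12}\cot(\alpha)$ should cancel, yielding a clean factorisation of the form
$$b_{310}b_{101}-b_{110}b_{301}=\frac{a_{03}\bigl(\kappa_2-\kappa_1\csc^2(\alpha)\bigr)}{\kappa_2^3}.$$
The bracketed factor vanishes only on a codimension-one subset of the $(\alpha,\kappa_1,\kappa_2)$ parameter space and so may be discarded under the word \emph{generically} in the statement. Hence, generically, regularity of $\mathcal{E}_1$ at the origin is equivalent to $a_{03}\ne 0$.

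Finally, I would invoke the classical characterisation of ridges in Monge form (see \cite{fukui}): in the parametrisation~(\ref{Monge}) with coordinate curves along the principal directions and $\kappa_1\ne\kappa_2$ at the origin, the coefficient $a_{03}$ equals $\partial_v\kappa_2(0,0)$, and the origin is a ridge point of $S$ with respect to $\partial_v$ precisely when $a_{03}=0$. Combining this with the previous step proves the proposition. The main obstacle is the algebraic bookkeeping in the middle step, namely verifying that the $a_{12}$-terms cancel and that $a_{03}$ factors out globally; the first step is a trivial observation about the shape of the Jacobian, and the last step is the quotation of a standard fact about ridges.
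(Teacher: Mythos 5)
Your proposal is correct and follows essentially the same route as the paper: compute the normal vector $\mathcal{E}_{1_u}\times\mathcal{E}_{1_v}$ at the origin, observe that it factors as $a_{03}$ times a term depending only on $\alpha,\kappa_1,\kappa_2$ (your determinant $\frac{a_{03}\left(\kappa_2-\kappa_1\csc^2(\alpha)\right)}{\kappa_2^3}$ agrees with the paper's condition $(\sin^2(\alpha)\kappa_2(0)-\kappa_1(0))a_{03}\ne 0$), and identify $a_{03}=0$ with the ridge condition along $\partial_v$. The only difference is cosmetic: you discard the extra factor by genericity, whereas the paper argues it is automatically non-zero under the standing assumption $\kappa_1(0)>\kappa_2(0)$.
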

\begin{proof}
	A straightforward calculation can demonstrate that the vector $\mathcal{E}_{1_u}\times\mathcal{E}_{1_v}$ is not zero at $p$ if 
$(\sin^2(\alpha) \kappa_2(0)-\kappa_1(0))a_{03}\ne 0.$ We assume that $\kappa_1(0)-\kappa_2(0)>0$ so $\sin^2(\alpha) \kappa_2(0)-\kappa_1(0)$ is different from zero. Therefore, the normal vector of the evolutoid at $p$ is not zero if and only if $a_{03}\ne 0$, which is equivalent to say that the origin is not a ridge point of $S$ with respect to the vector $\partial_v$.
\end{proof}

\subsection{Contact with plane}

After performing a suitable change of coordinate on \eqref{evolutoid}, 
let $\psi : U \to \mathbb{R}^3$ be the Monge form parametrization of the regular evolutoid $\mathcal{E}_1$ of the surface $S$. The contact of $\mathcal{E}_1$
with planes is measured by the singularities of the height functions on $\mathcal{E}_1$.

Recall that the family of height functions $H : U\times S^2 \to \mathbb{R}$ on $\mathcal{E}_1$ is given by
\[H(u,v,{\bf v})=\langle \psi(u,v),\bf v\rangle,\]
where ${\bf v}$ is a unit vector in $\mathbb R^3$.
 The contact of a regular surface with planes is well studied (see for instance \cite{izumiya}). In this subsection, we shall derive geometric information about the original surface $S$ from the family of height functions of its evolutoid. 
 Before stating our results, it is worth mentioning the following Proposition about the singularities of the height function of the evolutoid. In \cite{izumiya} is presented the singularities of height function for any surface.
 \begin{proposition}[\cite{izumiya}]
 	The height functions $h_{\bf v}(u,v)=H(u,v,{\bf v})$ is singular at $p\in \mathcal{E}_1$ if and only if ${\bf v}$ is a normal direction to $\mathcal{E}_1$ at $p$ $($which is denoted by $N_p)$. The singularity of $h_{\bf v}$ at $p$ is of type $A_1^+$ if and only if $p$ an elliptic point, $A_1^-$ if and only if $p$ a hyperbolic point, and $A_2$ if and only if $p$ is a parabolic point.
 \end{proposition}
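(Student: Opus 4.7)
The plan is to proceed in two stages: first identify when $h_{\bf v}$ is singular, then classify the singularity type. A direct computation gives $(h_{\bf v})_u = \langle \psi_u,{\bf v}\rangle$ and $(h_{\bf v})_v = \langle \psi_v,{\bf v}\rangle$, so the singular set of $h_{\bf v}$ consists of the points where ${\bf v}$ is orthogonal to both $\psi_u$ and $\psi_v$. Since ${\bf v}\in S^2$, this is equivalent to ${\bf v}=\pm N_p$, establishing the first assertion.

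For the classification I would exploit the Euclidean invariance of the height function construction and place $p$ at the origin of $\mathbb R^3$ with $N_p$ aligned with the $z$-axis, writing $\mathcal E_1$ locally in Monge form $\psi(u,v)=(u,v,g(u,v))$ with $g(0)=0$ and $\nabla g(0)=0$. Then the height function in the normal direction reduces to $h_{N_p}(u,v)=g(u,v)$, so the $\mathcal A$-type of $h_{N_p}$ at $0$ coincides with the $\mathcal R$-type of the germ $g$. After rotating $(u,v)$ to the principal directions of $\mathcal E_1$ at $p$, one has $j^2 g=\tfrac12(\tilde\kappa_1 u^2+\tilde\kappa_2 v^2)$, where $\tilde\kappa_1,\tilde\kappa_2$ are the principal curvatures of $\mathcal E_1$ at $p$, so $\det(\mathrm{Hess}\, g)(0)=\tilde\kappa_1\tilde\kappa_2=K_{\mathcal E_1}(p)$.

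The remaining classification follows from the Morse/Splitting lemma. If $K_{\mathcal E_1}(p)>0$ (elliptic), the Hessian is definite, so $g$ is $\mathcal R$-equivalent to $\pm(u^2+v^2)$, giving an $A_1^+$ singularity; if $K_{\mathcal E_1}(p)<0$ (hyperbolic), the Hessian is indefinite, so $g$ is $\mathcal R$-equivalent to $u^2-v^2$, giving $A_1^-$. The main obstacle, and the only point where any additional genericity is required, is the parabolic case $K_{\mathcal E_1}(p)=0$: here exactly one eigenvalue of the Hessian vanishes and the Splitting lemma reduces $g$ to $\pm v^2+\tilde g(u)$ for a one-variable germ $\tilde g$, so that obtaining an $A_2$ singularity amounts to verifying $\tilde g'''(0)\ne 0$. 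This would be checked by computing the third order coefficient of $g$ in the null direction of the Hessian; the vanishing locus of this invariant is exactly the cusp-of-Gauss set of $\mathcal E_1$, which is avoided at generic parabolic points, so an $A_2$ singularity is obtained at every parabolic point of $\mathcal E_1$ off this degenerate subset.
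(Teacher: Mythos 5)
Your argument is essentially the paper's: both proofs compute $(h_{\bf v})_{u_i}=\langle\psi_{u_i},{\bf v}\rangle$ to get the normal-direction criterion, and both identify the Hessian of $h_{N_p}$ with the second fundamental form of $\mathcal{E}_1$, so that the sign of its determinant (equivalently of the Gauss--Kronecker curvature) distinguishes $A_1^+$ from $A_1^-$. The only substantive difference is in the parabolic case, and there your version is actually the more careful one. The paper simply asserts that a vanishing Hessian determinant gives an $A_2$ singularity, but a degenerate Hessian only yields $A_{\geq 2}$; your reduction via the splitting lemma to $\pm v^2+\tilde g(u)$ and the explicit requirement $\tilde g'''(0)\neq 0$ (i.e.\ that $p$ is not a cusp of Gauss) is exactly the missing nondegeneracy condition. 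Indeed, the paper's own Theorem \ref{Theo:height} confirms that at a parabolic point of $\mathcal{E}_1$ the height function can have an $A_3$ singularity when $a_{31}=0$, so the biconditional ``$A_2$ iff parabolic'' in the statement should be read as ``$A_{\geq 2}$ iff parabolic,'' with $A_2$ holding only off the cusp-of-Gauss locus, precisely as you note. One small caveat: in the first step the singular set condition gives ${\bf v}=\pm N_p$ rather than ${\bf v}=N_p$, which you state correctly and the paper glosses over; this is harmless since $h_{-N_p}=-h_{N_p}$ has the same singularity type up to sign.
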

\begin{proof}
Let $u_i,u_j\in\{u,v\}$ for $i,j=1,2$. We have 
$$\dfrac{\partial h_{\bf v}}{\partial u_i}(u,v)=\langle\psi_{u_i}(u,v),{\bf v}\rangle=0\Leftrightarrow {\bf v}=N_p.$$
Now fix ${\bf v}=N_p$ we have:
\begin{align*}
\dfrac{\partial^2h_{\bf v}}{\partial u_i\partial u_j}(u,v)=\langle\psi_{u_iu_j}(u,v),N_p\rangle = \Gamma_i^j, 
\end{align*}  
where $\Gamma_i^j$ are the coefficients of the second fundamental form of $\mathcal{E}_1$. Since the sign of the Hessian matrix $(\Gamma_i^j)_{2\times 2}$ and the Gauss-Kronecker curvature of $\mathcal{E}_1$, denoted by $\tilde{K}$, at $p = \psi(u,v)$ are the same, then the type of the singularity of $h_{\bf v}$ is distinguished by the sign of the determinant of the Hessian matrix. This means that
\begin{align*}
h_{\bf v} \, \text{has an} \, A_1^+\text{-singularity at}\, p &\Leftrightarrow \, p\,\text{is an elliptic point},\\
h_{\bf v} \, \text{has an} \, A_1^-\text{-singularity at}\, p &\Leftrightarrow \, p\,\text{is an hyperbolic point}.
\end{align*}
The curvature $\tilde{K}$ at $p = \psi(u,v)$ is zero if and only if the determinant of the Hessian matrix $(\Gamma_i^j)_{2\times 2}$, is zero. This means that $p$ is a parabolic point if and only if $h_{\bf v}$ has $A_2$-singularity.  
\end{proof} 
Some important differential geometric information of the evolutoid are determined by our next theorems.
 
 \begin{theorem}\label{Prop:assymp}
 	Under the assumptions stated above, the following results hold for $\mathcal{E}_1$ given in \eqref{evolutoid}, e\-qui\-pped with the Monge form parametrization $\psi$ at $p=\psi(0,0)$ with ${\bf v}=\psi_y(0,0)\in T_p\mathcal{E}_1$.
 	\begin{itemize}
 		\item[i)] The point $p$ is a parabolic point of the evolutoid $\mathcal{E}_1$ if and only if the origin is a sub-parabolic point of $S$.
 		\item[ii)] The direction ${\bf v}$ is asymptotic at $p$ if and only if $$a_{21} =\frac{\left(a_{12} - \kappa_2(0) (\kappa_2(0)-\kappa_1(0)) \tan(\alpha)\right)^2}{a_{03}}, $$
 		where $\kappa_1(0)$ and $\kappa_2(0)$ are the principal curvatures of $S$ at the origin.
 		\item[iii)] Suppose that the origin is not a parabolic point of $S$. If ${\bf v}$ is an asymptotic direction at $p$ and $p$ is a parabolic point of $\mathcal{E}_1$, then the parabolic set of $\mathcal{E}_1$ is a smooth curve at $p$, if and only if $a_{31}\ne 0$.
 	\end{itemize}
 \end{theorem}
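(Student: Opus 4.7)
The plan is to work in a Monge-form parametrization $\psi(x,y)=(x,y,g(x,y))$ of $\mathcal{E}_1$ centered at $p$ and to translate each of the three statements into a condition on the Taylor coefficients of $g$, which are then read off from the invariants $\kappa_1,\kappa_2,a_{ij}$ of $S$ via the explicit expansion in Proposition \ref{prop:evol}.

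The first step is to set up this Monge form. From Proposition \ref{prop:evol}, both tangent vectors $\mathcal{E}_{1,u}(0,0)$ and $\mathcal{E}_{1,v}(0,0)$ have vanishing second component (since $h_2$ vanishes to order two), so the tangent plane of $\mathcal{E}_1$ at $p$ is the $xz$-plane and the unit normal is $\pm\partial_y$. The $2\times 2$ minor computed in the proof of Proposition \ref{prop:regular} shows that $(u,v)\mapsto(h_1-b_{100},\,h_3-b_{300})$ is a local diffeomorphism at the origin under the standing hypotheses, so inverting it and substituting into $h_2$ yields the height function $g$ of the Monge form after a relabeling interchanging the roles of $y$ and $z$. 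I would not rotate in the tangent plane to align the principal directions of $\mathcal{E}_1$ with the axes, so $g_{xy}(0)$ need not vanish a priori.

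With this setup, (i) reads $g_{xx}(0)g_{yy}(0)-g_{xy}(0)^2=0$; (ii) reads $g_{yy}(0)=0$ because $\mathbf{v}=\psi_y(0,0)=(0,1,0)$ in the Monge frame; and under the joint hypotheses of (iii) one has $g_{yy}(0)=g_{xy}(0)=0$, so the smoothness of the parabolic set $\{g_{xx}g_{yy}-g_{xy}^2=0\}$ at the origin reduces to the non-vanishing of the pair $(g_{xx}(0)\,g_{xyy}(0),\,g_{xx}(0)\,g_{yyy}(0))$. For (i), I would compute this determinant directly from the coefficients in Proposition \ref{prop:evol} and match it against the Bruce--Wilkinson/Fukui characterization of the sub-parabolic curve of $S$ with respect to $\partial_v$, cited in \cite{BruceWilkinson,fukui}. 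For (ii), the asymptotic condition is equivalent to the vanishing of the $y$-component of $\mathcal{E}_{1,vv}(0,0)$ up to a non-zero factor coming from the change of coordinates; setting this scalar to zero and solving for $a_{21}$ yields the displayed formula.

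For (iii), one needs $g_{xyy}(0)$ and $g_{yyy}(0)$, which depend on third-order data of $\mathcal{E}_1$ beyond what is displayed in Proposition \ref{prop:evol}; in particular, the coefficients $a_{22},a_{13},a_{31},a_{04}$ of $S$ enter the picture through the extended expansion of $h_1,h_2,h_3$. After pushing these through the inversion and substitution described above and simplifying using the constraints of (iii), one should find that both $g_{xyy}(0)$ and $g_{yyy}(0)$ are proportional to $a_{31}$ with pre-factors that are non-zero under the genericity hypotheses (origin neither parabolic nor umbilic of $S$, and not on the ridge associated with $\partial_v$). The main obstacle is precisely this third-order bookkeeping: the algebra is mechanical but tedious, and is the natural step to delegate to a computer algebra system.
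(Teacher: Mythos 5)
Your overall strategy---pass to a Monge form of $\mathcal{E}_1$ at $p$ using the linear data of Proposition \ref{prop:evol} and read the parabolic, asymptotic and smoothness conditions off the Hessian and third-order jet of the height function---is essentially the paper's own computation; the paper merely works with the second fundamental form coefficients $\tilde{l},\tilde{m},\tilde{n}$ directly in the $(u,v)$-parametrization instead of inverting to a graph, which is only a difference in bookkeeping. Your setup of (i) and (iii) is sound: the determinant condition does reduce to $a_{21}=0$, and your reduction of (iii) to the non-vanishing of the pair $\left(g_{xx}(0)g_{xyy}(0),\,g_{xx}(0)g_{yyy}(0)\right)$ is consistent with the paper's computation that the linear part of $\tilde{K}$ is a nonzero multiple of $a_{31}\left(\tan(\alpha)u-v\right)$.

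The genuine gap is in your execution of (ii). The tangent vector $\mathbf{v}=\psi_y(0,0)=(0,0,1)$ is \emph{not} a scalar multiple of the coordinate vector $\mathcal{E}_{1,v}(0,0)$ of the parametrization \eqref{evolutoid}: by Proposition \ref{prop:evol}, $\mathcal{E}_{1,v}(0,0)=-\frac{a_{03}}{\kappa_2^2}\left(\cot(\alpha),0,1\right)$, whose first component is nonzero for $\alpha\in(0,\pi/2)$ and $a_{03}\neq 0$. Hence $g_{yy}(0)$ is not ``the $y$-component of $\mathcal{E}_{1,vv}(0,0)$ up to a non-zero factor'': that normal component is $\tilde{n}(0,0)=a_{03}/\kappa_2(0)$, which is nonzero under the standing regularity hypothesis, so the equation you propose to solve has no solution and cannot yield the displayed formula for $a_{21}$. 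The correct computation writes $(0,0,1)=c_1\mathcal{E}_{1,u}(0,0)+c_2\mathcal{E}_{1,v}(0,0)$ with $c_1\neq 0$, so that (using $\tilde{m}(0,0)=0$) the asymptotic condition is $c_1^2\tilde{l}(0,0)+c_2^2\tilde{n}(0,0)=0$, i.e.\ $b_{101}^2\,\tilde{l}(0,0)+b_{110}^2\,\tilde{n}(0,0)=0$; the $a_{21}$-dependence of the answer enters entirely through the $\tilde{l}$-term that your shortcut discards, and this equation is exactly the quadratic relation $\left(\kappa_2(0)(\kappa_2(0)-\kappa_1(0))-a_{12}\cot(\alpha)\right)^2=a_{03}a_{21}\cot^2(\alpha)$ equivalent to the stated formula. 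Carrying out the inversion and substitution you describe in your setup would of course produce this automatically, but the explicit step you wrote down in its place is the one that fails.
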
 
\begin{proof}
	For the evolutoid $\mathcal{E}_1$, let $\tilde{l},\tilde{m}$ and $\tilde{n}$ denote the coefficients of the second fundamental form. We have $\tilde{l}(0,0)=-(a_{21}/\kappa_2(0))$, $\tilde{m}(0,0)=0$ and $\tilde{n}(0,0)=(a_{03}/\kappa_2(0))$. 
	
	i) The point $p$ is a parabolic point of the evolutoid $\mathcal{E}_1$ if and only if the numerator of the parabolic curve $\tilde{K}$ at $(0,0)$ is zero. Therefore, we have \[(\tilde{l}\tilde{n}-\tilde{m}^2)(0,0)=-\dfrac{\kappa_2^4 a_{21}}{a_{03} \left(-\kappa_2(0) + \kappa_1(0) + \kappa_1(0) \cot^2(\alpha)\right)^2},\] which is zero if $a_{21}=0$. This means that the origin is a sub-parabolic point of the surface $S$ with respect to the vector $\partial_u$.
	
	ii) The direction ${\bf v}$ is asymptotic if $\tilde{\kappa}_n({\bf v})=0$, where $\tilde{\kappa}_n$ denotes the normal curvature of $\mathcal{E}_1$. Therefore, the vector ${\bf v} = (0, 0, 1)$ is an asymptotic direction at
	$p$ if and only if 
	\begin{align}\label{assym}
	a_{02}^2 (\kappa_2(0)-\kappa_1(0))^2 - 
	2 \kappa_2(0) a_{12} (\kappa_2(0)-\kappa_1(0)) \cot(\alpha) +&\nonumber\\ +(a_{12}^2 - a_{03} a_{21}) \cot^2(\alpha) & = 0.
	\end{align}
	As $a_{03}\ne 0$, we obtain $$a_{21} =\dfrac{\left(a_{12} - \kappa_2(0) (\kappa_2(0)-\kappa_1(0)) \tan(\alpha)\right)^2}{a_{03}}$$ from (\ref{assym}).
	
	iii) Let $\tilde{K}(u,v)=\tilde{K}$ denote the parabolic curve of the evolutoid. When {\bf v} is an asymptotic direction and $p$ is a parabolic point, by using (i) and (ii), we have:  
	\begin{align*}
	\tilde{K}= \frac{\kappa_2^5(0) a_{31} \tan^2(\alpha)}{a_{03} (\kappa_1(0) \cot(\alpha) + (-\kappa_2(0) + \kappa_1(0)) \tan^3(\alpha)}(\tan(\alpha) u-v)+O(2).
	\end{align*}
	As the origin is not a parabolic or umbilic point of $S$, $\kappa_2(0)\ne 0$. Hence, $\tilde{K}$ is smooth at $p$, if and only if $a_{31}\ne 0$.  
\end{proof}
\begin{theorem}\label{Theo:height}
	For a regular surface $S$ parametrized in Monge form \eqref{Monge}, let $\mathcal{E}_1$ denote its evolutoid given in \eqref{evolutoid}. Also, consider the point $p=\psi(0,0)$ and ${\bf v}=(0,0,1)\in T_p\mathcal{E}_1$ where $\psi : (\mathbb{R}^2,0) \to (\mathbb{R}^3,p) $ denotes the Monge form parametrization of $\mathcal{E}_1$. Assume that the origin is not a flat umbilic point of $S$. Then for $p$ a parabolic point of $\mathcal{E}_1$ and ${\bf v}$ an asymptotic direction at $p$, the height function $h_{N_0}=H(u,v,N_0)$ has a singularity at $p$ of type
	\begin{align*}
	& A_2 \Leftrightarrow a_{31}\ne 0, \\
	& A_3 \Leftrightarrow a_{31}=0 \,\, \text{and}\,\, a_{41}\ne 0.
	\end{align*}
\end{theorem}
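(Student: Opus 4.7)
The strategy is to compute the $4$-jet of the height function $h_{N_0}(u,v)=\langle\psi(u,v)-p,\,N_0\rangle$ at the origin and apply the standard determinacy criteria for $A_2$ and $A_3$ singularities of real-valued functions of two variables. Since $N_0$ is normal to $\mathcal{E}_1$ at $p$, the origin is a critical point of $h_{N_0}$; the parabolic hypothesis forces the Hessian of $h_{N_0}$ at $(0,0)$ to be degenerate, while the non-flat-umbilic assumption on $S$, combined with Proposition \ref{prop:regular}, guarantees that the Hessian has rank exactly one with kernel along $\mathbf v$.

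Working in the Monge form $\psi(u,v)=(u,v,g(u,v))$ of $\mathcal{E}_1$, the function $h_{N_0}$ coincides (up to an additive constant) with the graph function $g$, and the parabolic-plus-asymptotic hypothesis reduces to $g_{uv}(0,0)=g_{vv}(0,0)=0$ with $g_{uu}(0,0)\neq 0$. A splitting-lemma argument --- equivalently, solving $g_u=0$ by the implicit function theorem as $u=\varphi(v)$ and Taylor-expanding the restriction $\tilde g(v)=g(\varphi(v),v)$ --- yields the classical criteria: $g$ has $A_2$ at $(0,0)$ iff $\tilde g'''(0)=g_{vvv}(0,0)\neq 0$, and if this vanishes then $g$ has $A_3$ at $(0,0)$ iff
\[
\tilde g^{(4)}(0)\;=\;g_{vvvv}(0,0)\;-\;\frac{3\,\big(g_{uvv}(0,0)\big)^2}{g_{uu}(0,0)}\;\neq\;0.
\]

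Next, I would carry out the substitution. Starting from \eqref{Envelope} together with the Monge expansion of $\phi$, one expands $h_{N_0}$ as a power series in $(u,v)$ through order four. By Theorem \ref{Prop:assymp}(i)--(ii), the parabolic-and-asymptotic condition forces $a_{21}=0$ together with $a_{12}=\kappa_2(0)\bigl(\kappa_2(0)-\kappa_1(0)\bigr)\tan(\alpha)$. Substituting these relations, the cubic coefficient $g_{vvv}(0,0)$ collapses to a nonzero scalar multiple of $a_{31}$, where the scalar is a product of factors such as $\kappa_2(0)$, $a_{03}$ and $\sin^2(\alpha)\kappa_2(0)-\kappa_1(0)$, each nonzero under the standing hypotheses (Proposition \ref{prop:regular} and the non-umbilic assumption). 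This gives the $A_2$ half of the statement. Imposing $a_{31}=0$ in addition and evaluating $g_{vvvv}(0,0)-3\bigl(g_{uvv}(0,0)\bigr)^{2}/g_{uu}(0,0)$ in the same way produces a nonzero multiple of $a_{41}$, yielding the $A_3$ half.

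The main obstacle is the book-keeping of the higher-order terms: Proposition \ref{prop:evol} records only the $2$-jet of $\mathcal{E}_1$, whereas the $A_3$ criterion requires $4$-jet data, which must be extracted afresh from \eqref{Envelope}. This is a routine but sizeable symbolic computation, best handled with a CAS; the substantive content of the argument lies in the drastic simplifications produced by the parabolic and asymptotic relations, which eliminate every contribution to the leading nonzero Taylor coefficient except the ones proportional to $a_{31}$, and, subsequently, to $a_{41}$.
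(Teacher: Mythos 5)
Your proposal is correct and follows essentially the same route as the paper: put $\mathcal{E}_1$ in Monge form so that $h_{N_0}$ is the graph function $g$, use Proposition \ref{prop:regular} and Theorem \ref{Prop:assymp} to impose $a_{03}\ne 0$, $a_{21}=0$ and $a_{12}=\kappa_2(0)(\kappa_2(0)-\kappa_1(0))\tan(\alpha)$, check that the coefficient of $u^2$ is nonzero, and identify the leading cubic and quartic coefficients with nonzero multiples of $a_{31}$ and $a_{41}$. Your explicit splitting-lemma correction term $g_{vvvv}-3g_{uvv}^2/g_{uu}$ is just a more carefully stated version of the paper's reduction of $j^4g$ to $g_{20}u^2+g_{04}v^4$ by coordinate changes.
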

	\begin{proof}
	After performing a suitable change of coordinates on the evolutoid (\ref{evolutoid}), one can assume that $\psi(u,v)=(u,g(u,v),v)$ with $j^1g=0$. The height function along the normal direction $N_0=(0,1,0)$ at the point $p$ is given by $h_{N_0}(u,v)=g(u,v)$. According to the Proposition \ref{prop:regular} and Theorem \ref{Prop:assymp}, if $\mathcal{E}_1$ is regular, $p$ is a parabolic point of $\mathcal{E}_1$ and {\bf v} is an asymptotic direction at $p$, then $a_{03}\ne 0$, $a_{21}=0$ and $a_{12}=\kappa_2(0) (\kappa_2(0)-\kappa_1(0)) \tan(\alpha)$. The coefficient of $u^2$ in $g(u,v)$ is equal to $(\kappa_2^3(0) \tan^2(\alpha))/a_{03}$, which is not zero since $\kappa_2(0)\ne 0$. Using some appropriate change of coordinates, one can eliminate the term $uv$ in the height function $g$ and conclude that $h_{N_0}$ has an $A_2$-singularity if and only if $a_{31}\ne 0$. 
	
 The height function has an $A_{\geq 2}$-singularity at the origin if and only if $a_{31}=0$. Then we have $j^4g(u,v)=g_{20} u^2+g_{04} v^4,$ where
 \begin{align*}
 g_{20} = & \dfrac{\kappa_2^3(0)\tan^2(\alpha)}{2a_{03}}, \\
 g_{04} = & - \dfrac{\kappa_2^3(0) a_{41}}{24\left(\kappa_1(0)\cot(\alpha)+(\kappa_1(0)-\kappa_2(0))\tan(\alpha)\right)^4}.
 \end{align*}   
	Clearly, the height function has an $A_3$-singularity at the origin if and only if  $a_{41}\ne 0$. Note that 
\[\kappa_1(0)\cot(\alpha)+(\kappa_1(0)-\kappa_2(0))\tan(\alpha)\ne 0\]
because if it is zero then 
\begin{equation*}
\begin{array}{rccc}
\dfrac{\kappa_1(0)}{\tan(\alpha)}+(\kappa_1(0)-\kappa_2(0)\tan(\alpha)&= &0 & \Leftrightarrow \\
 \dfrac{1}{\cos^2(\alpha)}\kappa_1(0) - \dfrac{\sin^2(\alpha)}{\cos^2(\alpha)}\kappa_2(0) &=&0& \Leftrightarrow \\
 \kappa_1(0) - \sin^2(\alpha) \kappa_2(0)& =&0,&
 \end{array}
\end{equation*}
which contradicts our assumption $\kappa_1(0)-\kappa_2(0) > 0$.
    
\end{proof}
A point on the parabolic set where the height function
along the normal direction to the surface has an $A_3$-singularity is called a \textit{cusp of Gauss}. A cusp of Gauss point $q$ of a regular surface in $\mathbb{R}^3$ is called elliptic (resp. hyperbolic) if the height function along the normal direction at $q$ has an $A^+_3$-singularity (resp. $A^-_3$-singularity) at $q$ (for further details see \cite{izumiya}).
\begin{remark}
	 Suppose that the point $p=\psi(0,0)$ is the cusp of Gauss point of the evolutoid $\mathcal{E}_1$. By Theorem \ref{Theo:height}, one can conclude that $p$ is elliptic if $a_{41}>0$ or hyperbolic, if $a_{41}<0$.
\end{remark}
\begin{proposition}
	Let $S$ and its evolutoid $\mathcal{E}_1$ be given in Monge forms $\phi$ and $\psi$ at $(0,0)$ as in \eqref{Monge} and $(\ref{evolutoid}),$ respectively.
	Also let ${\bf v} = (0,0,1) \in T_p\mathcal{E}_1$, where $p=\psi(0,0)$.
	If {\bf v} is an asymptotic direction at $p$ and $p$ is a parabolic point but not a cusp of Gauss, then the geodesic curvature of $\mathcal{E}_1$ at $N(p)$ of the image of the parabolic curve by the Gauss map is
	\[ \tilde{\kappa}_g =\frac{a_{03} \cos(\alpha) \cot^3(\alpha) (-a_{13} \cos(\alpha) + a_{03} (\kappa_2(0)-\kappa_1(0)) \sin(\alpha))}{\kappa_2^5(0) (-\kappa_2(0) + 2 \kappa_1(0) + \kappa_2(0) \cos(2\alpha))}. \]
\end{proposition}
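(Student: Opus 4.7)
The plan is to work in the Monge coordinates $\psi(u,v) = (u, g(u,v), v)$ for $\mathcal{E}_1$ introduced in the proof of Theorem \ref{Theo:height}, in which $N_0 = (0,1,0)$ is the unit normal at $p$ and $\mathbf{v}=(0,0,1)$ is the asymptotic direction. The hypotheses of the proposition—$p$ is a parabolic point of $\mathcal{E}_1$ and $\mathbf{v}$ is asymptotic at $p$, but $p$ is not a cusp of Gauss—translate, by Theorem \ref{Prop:assymp}(i)--(ii) and Theorem \ref{Theo:height}, into the constraints
\[
a_{21}=0,\qquad a_{12}=\kappa_2(0)(\kappa_2(0)-\kappa_1(0))\tan(\alpha),\qquad a_{31}\neq 0.
\]
By Theorem \ref{Prop:assymp}(iii) the parabolic set of $\mathcal{E}_1$ is then a smooth curve through $p$, so all the objects appearing in the statement are well-defined.

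First I would parametrize the parabolic curve implicitly. In Monge coordinates the parabolic locus of $\mathcal{E}_1$ is the zero set of $g_{uu}g_{vv}-g_{uv}^2$, and the conditions above ensure that the gradient of this function is nonzero at the origin. By the implicit function theorem the curve admits a smooth parametrization $\sigma(t)=(u(t),v(t))$ with $\sigma(0)=(0,0)$, whose tangent $\sigma'(0)$ is determined by the linear part of $g_{uu}g_{vv}-g_{uv}^2$, itself read off from the jet of $g$ inherited from Proposition \ref{prop:evol}.

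Second, I would push the curve to the sphere by the Gauss map. For the Monge form the unit normal reads
\[
N(u,v)=\frac{(g_u(u,v),\,-1,\,g_v(u,v))}{\sqrt{1+g_u(u,v)^2+g_v(u,v)^2}},
\]
and we set $\gamma(t):=N(u(t),v(t))$, a smooth curve on $S^2$ with $\gamma(0)=(0,-1,0)$. Because the outward unit normal of $S^2$ at a point $\gamma$ is $\gamma$ itself, the geodesic curvature of $\gamma$ on the sphere is
\[
\tilde{\kappa}_g \;=\; \frac{\det\bigl(\gamma'(0),\,\gamma''(0),\,\gamma(0)\bigr)}{|\gamma'(0)|^{3}}.
\]
It remains to expand $\gamma'$ and $\gamma''$ to leading order by the chain rule, using the second-order jet of $g$ and the known components of $\sigma'(0)$, and to insert these expansions into the determinant.

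The main obstacle is the symbolic bookkeeping. After imposing the parabolic and asymptotic conditions, substituting $\sigma'(0)$, and expanding $\gamma$ to order two in $t$, the numerator and the denominator of $\tilde{\kappa}_g$ become polynomials of moderate degree in the jet coefficients $a_{ij}$, the principal curvatures $\kappa_i(0)$, and the trigonometric functions of $\alpha$; a careful (computer-assisted) simplification is required to collapse the output to the displayed form, in particular to recognize the factor $-a_{13}\cos(\alpha)+a_{03}(\kappa_2(0)-\kappa_1(0))\sin(\alpha)$ in the numerator and the factor $-\kappa_2(0)+2\kappa_1(0)+\kappa_2(0)\cos(2\alpha)=2(\kappa_1(0)-\kappa_2(0)\sin^{2}(\alpha))$ in the denominator. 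The latter is nonzero by the standing assumption $\kappa_1(0)>\kappa_2(0)$, as already noted at the end of the proof of Theorem \ref{Theo:height}, which guarantees that $\tilde{\kappa}_g$ is well defined.
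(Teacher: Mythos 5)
The paper gives no proof of this proposition---it states only that ``the proof follows by standard techniques \ldots and is omitted''---so there is no argument of record to compare yours against line by line. Your outline is the standard route and is structurally sound: the translation of the hypotheses into $a_{21}=0$, $a_{12}=\kappa_2(0)(\kappa_2(0)-\kappa_1(0))\tan(\alpha)$, $a_{31}\neq 0$ is exactly what Theorem \ref{Prop:assymp} and Theorem \ref{Theo:height} provide; the parabolic locus of $\psi(u,v)=(u,g(u,v),v)$ is indeed the zero set of $g_{uu}g_{vv}-g_{uv}^2$; and the formula $\tilde{\kappa}_g=\det(\gamma',\gamma'',\gamma)/|\gamma'|^3$ is the correct geodesic curvature of a curve on the unit sphere. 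One point worth making explicit: the hypothesis ``not a cusp of Gauss'' is needed not only for the smoothness of the parabolic curve in the domain but also to guarantee $\gamma'(0)=dN_p(\sigma'(0))\neq 0$, since at a parabolic point $dN_p$ has one-dimensional kernel spanned by the asymptotic direction and $\gamma'(0)$ vanishes precisely when $\sigma'(0)$ lies in that kernel, i.e.\ at a cusp of Gauss; without this the denominator $|\gamma'(0)|^3$ would vanish. The only thing your proposal does not deliver is the verification that the determinant actually collapses to the displayed closed form --- you defer this to a computer-assisted simplification --- but since the paper omits the entire computation as well, your sketch already supplies more structure than the published argument.
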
 
\begin{proof}
 The proof follows by standard techniques and definitions of differential geometry and is omitted.
\end{proof}
\subsection{Singular evolutoid}
According to Proposition \ref{prop:regular}, the evolutoid $\mathcal{E}_1$ given in \eqref{evolutoid} is singular at $(0,0)$ if $a_{03}=0.$ We require the following definition in order to further our understanding of the singular points of the evolutoid surface.
\begin{definition}\label{def:sing}
Let $U$, $\tilde{U}$ be open subsets of $\mathbb R^2$ and $V$, $\tilde{V}$ be open subsets of $\mathbb R^3$. A surface $\psi : U\to V$ has a cuspidal edge or swallowtail or cross cap singularity if $\psi$ be $\mathcal{A}$-equivalent to the map ${\bf x}: \tilde{U}\to\tilde{V}$ given in the following forms (see Figure \ref{fig:CuspSwall}):
\begin{itemize}
	\item Cross cap: $(t,s)\mapsto (t, ts, s^2)$;
	\item Cuspidal-edge: $(t,s)\mapsto(t,s^2,s^3)$;
	\item Swallowtail: $(t,s)\mapsto(t,2st+4s^3,ts^2+3s^4)$.
\end{itemize}	
\end{definition}
\begin{figure}[!htb]
	\includegraphics[clip, scale=0.7]{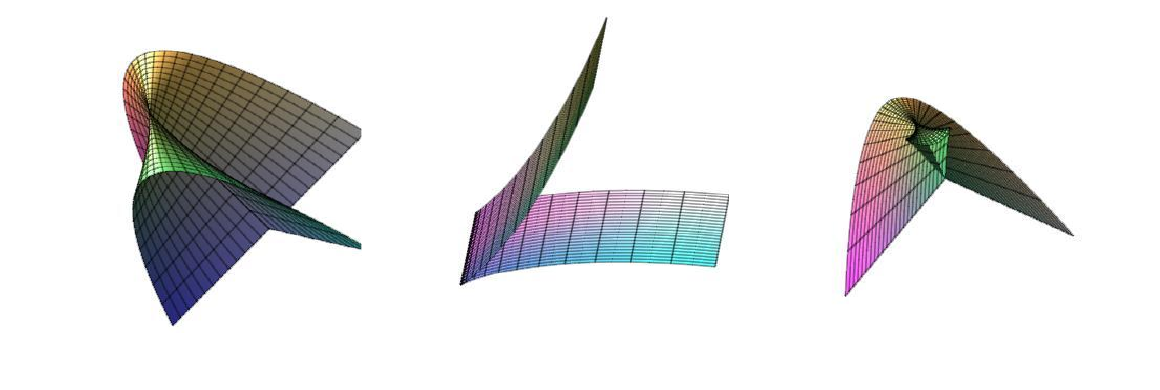}
	\caption{Cross cap singularity (left), cuspidal-edge singularity (center) and Swallowtail singularity (right).}
	\label{fig:CuspSwall}
\end{figure}
We begin with a general results on criterion of these singularities given in 
Definition \ref{def:sing}. The following is well-known as Whitney criterion for cross cap.
\begin{proposition}[Whitney]\label{prop:cross cap}
	Assume a surface $P(u,v)$ has a singular point at $(u_0,v_0)$ and $P_u(u_0,v_0)=0$. Then $(u_0,v_0)$ is a cross cap if and only if 
	\[\det\left(P_v(u_0,v_0),P_{uu}(u_0,v_0),P_{uv}(u_0,v_0)\right)\neq 0.\]
\end{proposition}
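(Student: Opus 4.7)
The plan is to prove the two implications separately, exploiting the fact that both the hypothesis $P_u(u_0,v_0)=0$ and the determinant condition are $\mathcal{A}$-invariant in the appropriate sense. Note that the hypothesis $\det(P_v,P_{uu},P_{uv})(u_0,v_0)\neq 0$ already forces $P_v(u_0,v_0)\neq 0$, so the kernel of $dP$ at the singular point is exactly the $u$-direction.

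\textbf{Necessity.} I would first compute the determinant directly on the model ${\bf x}(t,s) = (t, ts, s^2)$ at the origin: here the singular direction is $s$, and one finds ${\bf x}_t = (1,0,0)$, ${\bf x}_{ss} = (0,0,2)$, ${\bf x}_{st} = (0,1,0)$, so $\det({\bf x}_t, {\bf x}_{ss}, {\bf x}_{st}) = -2$. Then I would check that under an $\mathcal{A}$-equivalence preserving the singular structure (a source diffeomorphism $h$ with $h(u_0,v_0)=(u_0,v_0)$ and a target diffeomorphism $\xi$) the quantity $\det(P_v,P_{uu},P_{uv})$ is multiplied by a nonzero factor: the Jacobian determinant of $\xi$ at $P(u_0,v_0)$ times a nonzero power of the Jacobian of $h$ at $(u_0,v_0)$. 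Hence non-vanishing of the determinant is invariant.

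\textbf{Sufficiency.} After translating so that $(u_0,v_0)=0$ and $P(0)=0$, a linear change in the target puts $P_v(0)=(1,0,0)$. Writing $P=(P^1,P^2,P^3)$ and applying the implicit function theorem to $P^1$ (with $P^1_v(0)=1$) yields a source substitution $\tilde v = P^1(u,v)$ after which
\begin{equation*}
P(u,\tilde v)=\bigl(\tilde v,\; f(u,\tilde v),\; g(u,\tilde v)\bigr),
\end{equation*}
where $f,g\in\mathfrak{m}^2$, because $P_u(0)=0$ kills all $u$-derivatives and the normalization $P^2_v(0)=P^3_v(0)=0$ kills the $\tilde v$-derivatives of the last two components. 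A direct translation shows the hypothesis becomes
\begin{equation*}
f_{uu}(0)\,g_{u\tilde v}(0)-g_{uu}(0)\,f_{u\tilde v}(0)\neq 0.
\end{equation*}
I would then normalize the $2$-jets of $f$ and $g$: the displayed inequality says the $2\times 2$ matrix with rows $(f_{uu}(0),f_{u\tilde v}(0))$ and $(g_{uu}(0),g_{u\tilde v}(0))$ is invertible, so a linear change in the last two target coordinates can arrange the $2$-jet of the second component to be $c_1 u\tilde v+\alpha\tilde v^2$ and of the third to be $c_2 u^2+\beta\tilde v^2$ with $c_1,c_2\neq 0$. A further target diffeomorphism subtracting appropriate multiples of $(P^1)^2=\tilde v^2$ eliminates the residual $\tilde v^2$ terms; a final rescaling matches the $2$-jet of $(t,ts,s^2)$.

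The hard part, and the principal obstacle, is passing from $2$-jet equivalence to full $\mathcal{A}$-equivalence. For this I would invoke Mather's finite determinacy theorem together with the classical fact that the cross cap is $2$-$\mathcal{A}$-determined, which amounts to verifying that the tangent space to the $\mathcal{A}$-orbit of $(t,ts,s^2)$ contains $\mathfrak{m}^3\cdot\theta(f)$. Every other step in the argument is an explicit coordinate change whose legitimacy is handed to us directly by the determinant hypothesis; the finite-determinacy ingredient is the only nontrivial singularity-theoretic input.
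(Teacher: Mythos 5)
The paper offers no proof of this proposition at all: it is stated as a citation, attributed to Whitney and introduced with the phrase ``the following is well-known,'' so there is nothing internal to compare your argument against. Judged on its own terms, your outline is a correct rendering of the standard proof. The model computation $\det(\mathbf{x}_t,\mathbf{x}_{ss},\mathbf{x}_{st})=-2$ is right, and the invariance claim checks out: writing the source change as $(u,v)\mapsto(a(u,v),b(u,v))$, the normalization that $\partial_u$ spans the kernel on both sides forces $b_u=0$ at the point, and a short multilinearity computation shows the determinant picks up the factor $a_u^{3}b_v^{2}\det(d\xi)$ --- not literally ``a power of the Jacobian of $h$'' as you say, but nonzero all the same, so the conclusion stands. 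The reduction in the sufficiency direction is also correct: the hypothesis does become $f_{uu}g_{u\tilde v}-g_{uu}f_{u\tilde v}\neq 0$ at the origin, and the ensuing linear target change, the subtraction of multiples of $\tilde v^{2}=(P^1)^2$, and the rescaling do bring the $2$-jet to that of $(t,ts,s^2)$. You are right that the only genuine singularity-theoretic content is the $2$-$\mathcal{A}$-determinacy of the cross cap; that is exactly where the weight of the proof lies, and invoking Mather's determinacy criterion (verifying $\mathfrak{m}^{3}\theta(f)\subseteq T\mathcal{A}\cdot f$ for the model germ) is the standard way to discharge it. As a referee I would only ask you to carry out the invariance computation explicitly and to either cite or verify the determinacy claim, since both are asserted rather than proved; but there is no gap in the structure of the argument.
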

Note that for a cross cap singularity, by a rotation of the $uv$-coordinate plane, one can choose a coordinate system such as $P_u(u_0,v_0)=0$.

As cuspidal-edge and swallowtail singularities are not finitely determined (see \cite{izumiya} for more details) so the criterion to distinguish these singularities are more complicated and technical. We use \cite{MaKo} to prepare some notations and general ideas: The gradient vector field of a function $f(u,v)$ is denoted by $\nabla f:=(f_u,f_v)$. Moreover, for a planar vector $w=(\theta_1,\theta_2)$, the function $f_w:=\theta_1 f_u+\theta_2 f_v =\langle \nabla f, w\rangle$ is the directional derivative of $f$ with respect to $w$. Similarly, for a surface $P(u,v)$ we set $P_w:=\theta_1 P_u+\theta_2 P_v$. 
\begin{remark}
	Some common properties of cuspidal-edges and swallowtails are as follows:
	\begin{itemize}\label{rem:CuspSwal}
		\item[(i)] The unit normal vector field $\nu$, can be extended smoothly to the singular point.
		\item[(ii)] The function $\lambda:=\det(P_u,P_v,\nu)$ vanishes on the singular point but $\nabla\lambda$ does not vanish at the singular points.
		\item[(iii)] The singular set of the surface $P: U\subset\mathbb{R}^2\to \mathbb{R}^3$ is expressed as 
		\[\{(u,v)\in U\,|\,\lambda(u,v)=0 \}.\]
		Since $\nabla\lambda\neq 0$ at the singular point $(u_0,v_0)$ then the implicit function theorem yields that the singular set is a regular curve on a neighbourhood of $(u_0,v_0)$ in the $uv$-plane. This curve is called the singular curve and denoted by $\gamma(t)=(u(t),v(t))$.
		\item[(iv)] At each point on $\gamma(t)$, $P_u$ and $P_v$ are linearly dependent and do not vanish simultaneously. Therefore, for each $t$, there exists a vector $\eta(t)=(\theta_1(t),\theta_2(t))\ne (0,0)$ uniquely determined up to scalar multiplication such that
		\[P_{\eta(t)}=\theta_1(t) P_u(\gamma(t))+\theta_2(t) P_v(\gamma(t))=0.\]
		The vector $\eta(t)$ is called null vector at the singular point $\gamma(t).$
		\item[(v)] Using notations stated above, we have $\nu_{\eta(t)}\neq 0$.
		In this situation we set
		\[\mu(t):=\det\left(\dot{\gamma}(t),\eta(t)\right)=\dot{u}(t)\theta_2(t)-\dot{v}(t) \theta_1(t),\]
		where ``$\,\,^.$" denotes the derivative with respect to $t$.
	\end{itemize}
\end{remark}
\begin{proposition}[Criteria for cuspidal-edge and swallowtail \cite{MaKo}]
	Let the surface $P(u,v)$ has a singular point at $(u_0,v_0)$ and satisfies the properties given in Remark \ref{rem:CuspSwal}. Then the point $\gamma(t_0)=(u_0,v_0)$ corresponds to:
	\begin{itemize}
		\item[(1)]  a cuspidal-edge if and only if $\mu(t_0)\ne 0$. This means that, $\dot{\gamma}$ and $\eta$ are linearly independent at $t=t_0$.
		\item[(2)] a swallowtail if and only if 
		\[\mu(t_0)=0 \quad \text{and}\quad \dot{\mu}(t_0)\ne 0.\]
	\end{itemize}  
\end{proposition}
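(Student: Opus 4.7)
The plan is to prove both criteria by a reduction to normal form in adapted coordinates. Since $\nabla\lambda(u_0,v_0)\ne 0$, the implicit function theorem furnishes the singular curve $\gamma$ near $(u_0,v_0)$, and by an initial change of source coordinates I would arrange that $\gamma$ coincides with the $u$-axis, with $t_0$ corresponding to $u=0$ and $\dot\gamma(0)=\partial_u$. In these coordinates the null vector along $\gamma$ takes the form $\eta(u)=(\theta_1(u),\theta_2(u))$, and the invariant simplifies to $\mu(u)=\theta_2(u)$, so the two cases reduce to statements about the vanishing of $\theta_2$ and $\dot\theta_2$ at $u=0$.

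For the cuspidal-edge direction $\theta_2(0)\ne 0$, a further linear change in $v$ makes $\eta\equiv \partial_v$ along $\gamma$, hence $P_v(u,0)\equiv 0$ and $P(u,v)=P(u,0)+v^2 R(u,v)$ for some smooth vector-valued $R$. The nondegeneracy $\nu_{\eta}(0)\ne 0$ from Remark \ref{rem:CuspSwal}(v), together with the smooth extendibility of $\nu$ across the singular locus, implies that $P_u(0,0)$, $R(0,0)$ and $R_v(0,0)$ are linearly independent in $\mathbb R^3$. Applying a target isometry sending this frame to the standard basis and then a source change of the form $v\mapsto v\sqrt{\,\cdot\,}$ to normalize the $v^2$-coefficient, the $3$-jet of $P$ becomes that of $(t,s^2,s^3)$. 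Since the cuspidal edge is $\mathcal{A}$-$3$-determined, Mather's finite determinacy theorem yields $\mathcal{A}$-equivalence to the normal form, and the converse follows from the $\mathcal{A}$-invariance of $\mu$ and of the vanishing order of $\lambda$ along $\eta$.

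For the swallowtail direction $\theta_2(0)=0$ and $\dot\theta_2(0)\ne 0$, the null direction is tangent to $\gamma$ at the distinguished point and rotates transversely as $u$ varies. The analysis is more delicate: $P_v(u,0)$ vanishes only at $u=0$, but one still has the identity $\theta_1(u)P_u(u,0)+\theta_2(u)P_v(u,0)\equiv 0$ along $\gamma$. Expanding $P$ up to order four in $v$ and using $\dot\theta_2(0)\ne 0$ together with smooth extendibility of $\nu$, I would extract leading contributions proportional to $tv^2$ and $v^4$ in the two target directions transverse to $\partial_u$. A sequence of source and target diffeomorphisms then matches the $4$-jet of $P$ to the swallowtail normal form $(t,\,2st+4s^3,\,ts^2+3s^4)$, and since the swallowtail is $\mathcal{A}$-$4$-determined the equivalence follows. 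The converse is again handled by $\mathcal{A}$-invariance of the conditions $\mu(t_0)=0$ and $\dot\mu(t_0)\ne 0$.

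The main obstacle I expect is the swallowtail case: converting the geometric condition $\dot\mu(t_0)\ne 0$ into precise control over the mixed jet coefficients requires tracking several nested source/target diffeomorphisms while verifying that no other $\mathcal{A}$-class with analogous discriminant behavior (for instance higher $A_k$-cuspidal edges or degenerate beaks) is compatible with the hypotheses. The treatment in the cited reference \textbf{[MaKo]} organizes this bookkeeping through the null vector field and the function $\mu$ precisely to avoid the ad hoc Taylor expansion one would otherwise be forced into, and I would follow that strategy to complete the argument.
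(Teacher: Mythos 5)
First, note that the paper does not prove this proposition at all: it is quoted from \cite{MaKo} and used as a black box, so there is no internal proof to compare your argument against. Judged on its own terms, your sketch has a genuine gap at the decisive step of each case. After arranging the $3$-jet (resp.\ $4$-jet) to agree with that of $(t,s^2,s^3)$ (resp.\ the swallowtail), you invoke ``$\mathcal{A}$-$3$-determinacy'' and ``$\mathcal{A}$-$4$-determinacy'' together with Mather's finite determinacy theorem. But neither the cuspidal edge nor the swallowtail is finitely $\mathcal{A}$-determined --- the paper itself says so in the sentence immediately preceding the proposition --- because each has a whole curve of non-stable singular points in every neighbourhood of the origin, so the Mather--Gaffney criterion fails. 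Concretely, $(u,v)\mapsto(u,\,v^2,\,v^3+u^4v)$ has the same $3$-jet (indeed the same $4$-jet) as the cuspidal edge, yet its singular set is the single point $(0,0)$ rather than a curve, so it is not $\mathcal{A}$-equivalent to $(t,s^2,s^3)$. Hence agreement of a finite jet with the normal form proves nothing, and this is exactly why the criteria of \cite{MaKo} are nontrivial: their proof must exploit the front structure (the immersivity of the Legendrian lift $(P,\nu)$, encoded in items (i) and (v) of Remark \ref{rem:CuspSwal}) at \emph{all} orders, via generating families for Legendrian singularities or a normalization controlled by the Whitney/Malgrange division theorem, not by truncation.

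The preparatory part of your cuspidal-edge argument is essentially sound: straightening $\gamma$ to the $u$-axis, identifying $\mu(u)$ with $\theta_2(u)$, writing $P=P(u,0)+v^2R(u,v)$, and the linear independence of $P_u$, $P_{vv}$, $P_{vvv}$ at the origin does follow from $\lambda_v\neq 0$ and $\nu_v\neq 0$ (though you assert rather than verify this, and it takes a short computation with $\langle\nu,P_v\rangle=0$ and its derivatives). But the conclusion cannot then be reached by jet-matching, and the swallowtail half, where you only promise to ``extract leading contributions proportional to $tv^2$ and $v^4$,'' inherits the same defect in an aggravated form. A minor additional point: $\mu$ itself is not an $\mathcal{A}$-invariant (it depends on the parametrization of $\gamma$ and the scaling of $\eta$); only the conditions $\mu\neq 0$, respectively $\mu=0$ and $\dot\mu\neq 0$, are well defined, and your converse direction should be phrased in those terms. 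To repair the proof you would need a determinacy statement relative to the class of fronts with non-degenerate singular set, which is in substance what \cite{MaKo} establishes.
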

Now we are ready to investigate the singularity types of the evolutoid of a regular surface.
\begin{theorem}\label{Theo:Sing}
	Let  $S$ be a regular surface in $\mathbb R^3$ equipped with the Monge form parametrization $\phi$ at the origin as in \eqref{Monge}. For a given $0<\alpha<\pi/2$, let $\mathcal{E}_1$ be the evolutoid of $S$, as in \eqref{evolutoid}. If the $p=\mathcal{E}_1(0,0)$ is a singular point then it has a singularity of type cuspidal-edge.
	\end{theorem}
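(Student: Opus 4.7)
The plan is to apply the cuspidal-edge criterion of the preceding proposition. First, by Proposition \ref{prop:regular}, the hypothesis that $p=\mathcal{E}_1(0,0)$ is singular amounts (generically) to $a_{03}=0$. Substituting this condition into \eqref{evolutoid} immediately gives
\[
\mathcal{E}_{1_v}(0,0)=(b_{101},\,0,\,b_{301})=(0,0,0),\qquad \mathcal{E}_{1_u}(0,0)=(b_{110},\,0,\,b_{310})\neq 0,
\]
so the null direction at the singular point is $\eta(0)=(0,1)$, aligned with $\partial_v$, while $\mathcal{E}_{1_u}(0,0)$ remains a nonzero tangent vector.

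Next I would verify the structural hypotheses of Remark \ref{rem:CuspSwal}. Expanding $\mathcal{E}_{1_v}(u,v)=u\,\mathcal{E}_{1_{uv}}(0,0)+v\,\mathcal{E}_{1_{vv}}(0,0)+O(2)$, the unit normal $\nu$ extends smoothly to $(0,0)$ if and only if the direction of $\mathcal{E}_{1_u}(0,0)\times\mathcal{E}_{1_v}(u,v)$ is independent of the line of approach, which is the condition
\[
\det\bigl(\mathcal{E}_{1_u},\,\mathcal{E}_{1_{vv}},\,\mathcal{E}_{1_{uv}}\bigr)(0,0)=0.
\]
This is precisely the failure of Whitney's cross-cap criterion (Proposition \ref{prop:cross cap}), so establishing it simultaneously rules out the cross-cap case and produces a smooth $\nu$. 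A direct computation from the explicit coefficients of \eqref{evolutoid} with $a_{03}=0$ yields this vanishing because the relevant derivatives share a common zero in the second coordinate. Then, setting $\lambda=\det(\mathcal{E}_{1_u},\mathcal{E}_{1_v},\nu)$ and using $\mathcal{E}_{1_v}(0,0)=0$, the gradient at the origin simplifies to
\[
\lambda_u(0,0)=\det\bigl(\mathcal{E}_{1_u},\mathcal{E}_{1_{uv}},\nu\bigr)(0,0),\qquad \lambda_v(0,0)=\det\bigl(\mathcal{E}_{1_u},\mathcal{E}_{1_{vv}},\nu\bigr)(0,0),
\]
which are generically nonzero. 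By the implicit function theorem the singular set $\{\lambda=0\}$ is then a smooth curve $\gamma(t)$ through the origin with tangent $\dot\gamma(0)\propto(-\lambda_v(0,0),\lambda_u(0,0))$.

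Finally, since $\eta(0)=(0,1)$, one has $\mu(0)=\det(\dot\gamma(0),\eta(0))\propto -\lambda_v(0,0)\neq 0$, so case (1) of the criterion forces the singularity at $p$ to be a cuspidal-edge and excludes the swallowtail case. The main obstacle is purely computational bookkeeping: the coefficients controlling $\mathcal{E}_{1_{uv}}(0,0)$ and $\mathcal{E}_{1_{vv}}(0,0)$, and hence both the cross-cap determinant and $\lambda_v(0,0)$, are hidden inside the $O(2)$ and $O(3)$ remainders in Proposition \ref{prop:evol}. I would therefore expand the envelope parametrization \eqref{Envelope} to one higher order after setting $a_{03}=0$, track the dependence on $a_{ij}$, $\kappa_1$, $\kappa_2$, and $\alpha$, and verify that the cross-cap determinant collapses to zero identically while $\lambda_v(0,0)$ is a nonvanishing rational expression under the standing genericity hypotheses; once this algebra is in place the theorem follows mechanically from the criterion.
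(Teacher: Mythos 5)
Your proposal follows essentially the same route as the paper's proof: use Proposition \ref{prop:regular} to reduce to $a_{03}=0$, observe $\mathcal{E}_{1_v}(0,0)=0$ so the null vector is $\eta(0)=(0,1)$, rule out the cross cap via the (appropriately $u\leftrightarrow v$ swapped) Whitney determinant, and conclude $\mu(0)\neq 0$ from the singular curve $\{\lambda=0\}$, which is exactly the paper's argument. The only difference is that you defer the explicit coefficient computations (the expansion of $\nu$ and the nonvanishing of $\lambda_v(0,0)$) to ``bookkeeping,'' which the paper carries out explicitly; your plan is otherwise sound and complete.
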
 
\begin{proof}
	First of all note that, due to Proposition \ref{prop:regular} the point $p\in\mathcal{E}_1$ is singular if and only if $a_{03}=0$. For the sake of simplicity we denote the parametrization of the evolutoid by $\psi$. We have:
	
	(i) Due to Proposition \ref{prop:cross cap} the point $p$ has a cross cap singularity at $(0,0)$ if and only if $\det(\psi_v,\psi_{uu},\psi_{uv})\ne 0$ at $(0,0)$. But we have
	\begin{align*}
	\det(\psi_v,\psi_{uu},\psi_{uv}) = \det\left[\begin{array}[pos]{ccc}
	C_{11} & 0 & C_{13} \\
	C_{21} & 0 & C_{23}\\
	C_{31} & 0 & C_{33}
	\end{array}\right] = 0,
	\end{align*}
	where $C_{ij}$ with $i=1,2,3$ and $j=1,3$ are generally non null.
This implies that $p$ does not have cross cap singularity.

(ii) The unit normal vector $\nu$ has the following expression:
\begin{align*}
\nu=\frac{\psi_u\times\psi_v}{||\psi_u\times\psi_v||}=(\nu_1,\nu_2,\nu_3),
\end{align*}
where
\begin{align}
\nu_1= & \dfrac{a_{21} u + (a_{12}  - \kappa_1 \kappa_2  \cot(\alpha))v}{-\kappa_2 + \kappa_1 + \kappa_1 \cot(\alpha)^2} + O(2), \nonumber \\ 
\nu_2 = & -1+O(2), \label{nu} \\
\nu_3= & \dfrac{(\kappa_2 (\kappa_2 - \kappa_1) v - (a_{21} u + a_{12} v) \cot(\alpha)}{-\kappa_2 + \kappa_1 + \kappa_1 \cot(\alpha)^2}+O(2),\nonumber
\end{align}
where $\kappa_1$ and $\kappa_2$ are the principal curvatures of $S$ at the origin.

Therefore, the function $\lambda(u,v)=\det(\psi_u,\psi_v,\nu)= \lambda_{10} u + \lambda_{01} v + O(2)$ where $\nabla\lambda\ne (0,0)$. Hence, we can write $v$ as a function of $u$ and we have the singular curve $\gamma(t)=(t,v(t))$ where $v(t)=-\lambda_{10}/\lambda_{01} t +O(2)$. As $\psi_u$ and $\psi_v$ are linear dependent and do not vanish simultaneously. Therefore, at $t=0$ we have
\begin{align*}
\psi_u(0,0) & =\left(\dfrac{\kappa_2^2-\kappa_2 \kappa_1-a_{12}\cot(\alpha)}{\kappa_2^2}, 0, \dfrac{-a_{12}+\kappa_2 \kappa_1\cot(\alpha)}{\kappa_2^2}\right)\\
\psi_v(0,0) & =(0,0,0),
\end{align*} 
and $\tan(\alpha)^2 \ne \kappa_1/(\kappa_2-\kappa_1)$.
Thereupon, the vector $\eta(0) = (0,1)$ is the null vector at $\gamma(0)=(0,0)$. In this situation we have 
\begin{equation*}
\mu(0)=\det(\dot{\gamma}(0),\eta(0))=\det\left(\begin{matrix}
1 & -\dfrac{\lambda_{10}}{\lambda_{01}}\\
0 & 1
\end{matrix}\right) =1 
\end{equation*} 
Hence according to the Remark \ref{rem:CuspSwal}, the point $\gamma(0)=(0,0)$ corresponds to cuspidal edge singularity. Obviously, $\gamma(0)=(0,0)$ is not a swallowtail singularity.
\end{proof}



\bibliographystyle{amsplain}
\bibliography{references}

\end{document}